\author{Rikhav Shah%\fnref{affil}
}
\date{December 2018}
\newtheorem{theorem}{Theorem}[section]
\newtheorem{corollary}[theorem]{Corollary}
\newtheorem{lemma}[theorem]{Lemma}
\newtheorem{definition}{Definition}[section]
\newcommand{\row}{\mathbf{r}}
\newcommand{\srow}{\mathbf{s}}
\newcommand{\vecv}{\mathbf{v}}
\newcommand{\detset}{\mathcal{D}_n}
\newcommand{\ldash}{\textrm{---}\,\,}
\newcommand{\rdash}{\,\textrm{---}}
\newcommand{\cdash}{\textrm{---}}
\begin{document}
\title{Determinants of binary matrices achieve every integral value up to $\Omega(2^n/n)$}
\maketitle
\begin{abstract}
    This work shows that the smallest natural number $d_n$ that is not the determinant of some $n\times n$ binary matrix is at least $c\,2^n/n$ for $c=1/201$.  That same quantity naturally lower bounds the number of distinct integers $D_n$ which can be written as the determinant of some $n\times n$ binary matrix.  This asymptotically improves the previous result of $d_n=\Omega(1.618^n)$ and slightly improves the previous result of $D_n\ge 2^n/g(n)$ for a particular $g(n)=\omega(n^2)$ function.
\end{abstract}
%\noindent\keywordslike{MSC classification}{05A16; 15A15}
%\noindent\keywordslike{Keywords}{{binary matrix}; {determinant}; {lower bound}}

\section{Introduction}
We take a binary matrix $M$ to mean a matrix with all entries in $\{0,1\}$.
We investigate the range of the determinant, i.e. $\detset=\{\det(M):M\in \{0,1\}^{n\times n}\}$
\footnote{Many papers take entries of binary matrices to be $\pm 1$.
There exists a one-to-one correspondence between matrices with entries in \{-1,1\} of size $n+1$ and matrices with entries in \{0,1\} of size $n$, such that the determinants of corresponding matrices are off by a constant factor of $(-2)^n$.
In this way, results pertaining to one type of matrix easily carry over to results about the other.}.
Much work has sought to characterize $\detset$.
An old conjecture stated that $\detset$ is a set of consecutive integers.  A proof that $n=7$ is a counter-example to this conjecture was published in 1969 by Metropolis \cite{metropolis}, and an apparently independent proof was published by Carigen \cite{posed} in 1990.
Carigen \cite{posed} also provides exact values of $\detset$ for $n\le7$, and large subsets of $\detset$ for $8\le n\le10$.  In 2004, Orrick \cite{n15} finds exact values for $n=8,10$ and a larger subset for $n=9$.  Despite the aforementioned conjecture being false, it seems empirically that $\detset$ does contain a large set of consecutive integers centered at $0$, so one may ask what the smallest natural number $d_n$ \textit{not} in $\detset$ is.  For large $n$, the best lower bound on $d_n$ known prior to this work is \(\Omega(\phi^n)\) for $\phi=(1+\sqrt5)/2$ given by a construction due to Paseman \cite{paseman}.  For $n\le19$, the best known lower bounds are given by Zivkovic \cite{smallest_not_in_range} and are optimal for $n\le9$.

Note that one has $|\detset|\ge 2d_n-1$ by observing $\detset\supset\{1-d_n,\cdots,d_n-1\}$.  The construction of Paseman thus guarantees $|\detset|=\Omega(\phi^n)$.  However, a stronger result on $|\detset|$ is known.  In particular, Tikhomirov \cite{singularity_bound} recently determined the related quantity
\[|\{M\in\{0,1\}^{n\times n}\,|\,\det(M)=0\}|=2^{n^2}\left(\frac12+o(1)\right)^n.\]
His technique involved showing that for a uniformly randomly selected $M$, there is strong anti-concentration of $\langle \mathbf{h},\mathbf{r}_n\rangle$, where $\mathbf{h}$ is a unit vector perpendicular to the first $n-1$ rows of $M$ with $n$th row $\mathbf{r}_n$.  In particular, one has for any $t$ that $P(\det (M) = t)\leq \left(\frac12+o(1)\right)^n$.  This gives a lower bound of $(2-o(1))^n$ on the size of the support of $\det M$, i.e. $$|\detset|\ge(2-o(1))^n.$$
The probabilistic approach is non-constructive, however, and it is unclear if it can
help determine any particular members of $\detset$.
This work provides a construction that guarantees $d_n\ge(2-o(1))^n$, which asymptotically improves the best known lower bound on $d_n$ and slightly improves the best known lower bound on $|\detset|$ by shrinking the $o(1)$ term so that the overall new bound is $d_n=\Omega(2^n/n)$ versus the old bound of $d_n\ge 2^n/g(n)$ for a particular $g(n)=\omega(n^2)$.

Let $\mathcal{M}(\row_2,\cdots,\row_n)\subset\{0,1\}^{n\times n}$ be the family of binary matrices whose $i$th row is given by $\row_i$ for each $i\in\{2,\cdots ,n\}$.  Note that there are $2^n$ matrices in such a family.  Our task will be to select suitable rows $\row_i$ such that taking the determinant of each matrix in the family will result in few collisions.  It will be easier to first construct rows $\srow_i\in\{-1,0,1\}^n$, and from those construct $\row_i\in\{0,1\}^n$.

\section{Lemmas}

\begin{lemma}
Let $\mathbf{r}_1,\cdots,\mathbf{r}_n$ be the rows of invertible square matrix $M$.  If
$\mathbf{v}\in\mathbb{R}^n$ is orthogonal to $\mathbf{r}_2,\cdots,\mathbf{r}_n$
with $\mathbf{v}(k)\neq0$ for some $k$,
then $\det (M) = c\,\sum_j \mathbf{v}(j)\mathbf{r}_1(j)$ where
\begin{equation}
\label{constantc}
c=\vecv(k)^{-1}\det\left(
\begin{bmatrix}
\ldash e_k\rdash\\
\ldash\row_2\rdash\\
\vdots\\
\ldash\row_n\rdash\\
\end{bmatrix}
\right)
\end{equation}
where $e_k$ is the $k$th row of the identity.
Importantly, $c$ does not depend on $\row_1$.
\end{lemma}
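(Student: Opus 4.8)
The plan is to exploit that $\det(M)$ is a linear functional of the first row $\mathbf{r}_1$, to read that functional off as the cofactor vector of row $1$, and then to identify that vector as a scalar multiple of $\mathbf{v}$.

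First I would expand $\det(M)$ along the first row: $\det(M)=\sum_{j}\mathbf{r}_1(j)\,C_j$, where $C_j$ is $(-1)^{1+j}$ times the minor of $M$ obtained by deleting row $1$ and column $j$. The point to record immediately is that the cofactor vector $\mathbf{C}:=(C_1,\dots,C_n)$ is built entirely from $\mathbf{r}_2,\dots,\mathbf{r}_n$ and does not involve $\mathbf{r}_1$ at all; this is the source of the final ``$c$ does not depend on $\mathbf{r}_1$'' assertion.

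Next I would check that $\mathbf{C}$ is orthogonal to each of $\mathbf{r}_2,\dots,\mathbf{r}_n$. For $i\ge 2$, the inner product $\sum_j \mathbf{r}_i(j)C_j$ is precisely the cofactor expansion along the top row of the matrix obtained from $M$ by overwriting $\mathbf{r}_1$ with $\mathbf{r}_i$; that matrix has two equal rows, so its determinant, and hence this inner product, is $0$. Since $M$ is invertible the rows $\mathbf{r}_2,\dots,\mathbf{r}_n$ are linearly independent, so their common orthogonal complement is a line. Both $\mathbf{v}$ and $\mathbf{C}$ lie on that line, and $\mathbf{v}\neq 0$ because $\mathbf{v}(k)\neq0$, so $\mathbf{C}=c\,\mathbf{v}$ for a unique scalar $c$.

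Finally I would determine $c$ by evaluating everything at one convenient first row. Replacing $\mathbf{r}_1$ by $e_k$ does not change the cofactors $C_j$, so the determinant of the matrix in \eqref{constantc} equals $\sum_j e_k(j)C_j=C_k=c\,\mathbf{v}(k)$; dividing by $\mathbf{v}(k)\neq0$ yields exactly formula~\eqref{constantc}. Substituting $\mathbf{C}=c\,\mathbf{v}$ back into $\det(M)=\sum_j\mathbf{r}_1(j)C_j$ gives $\det(M)=c\sum_j\mathbf{v}(j)\mathbf{r}_1(j)$, and since $\mathbf{C}$, and hence $c$, was defined without reference to $\mathbf{r}_1$, the independence claim follows. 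The only step needing care is the dimension count that forces $\mathbf{C}$ and $\mathbf{v}$ to be proportional, which is exactly where invertibility of $M$ enters; the rest is bookkeeping with cofactor expansions.
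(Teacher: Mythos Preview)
Your proof is correct and follows essentially the same route as the paper's: both identify the first-row cofactor vector as lying in the one-dimensional orthogonal complement of $\mathbf{r}_2,\dots,\mathbf{r}_n$, deduce it is a scalar multiple of $\mathbf{v}$, and then pin down the scalar by plugging in $\mathbf{r}_1=e_k$. The only cosmetic difference is that the paper reaches the orthogonality of the cofactor vector via the first column of $M^{-1}$ and the adjugate formula, whereas you establish it directly by the ``two equal rows'' cofactor identity.
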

\begin{proof}
Both $\mathbf{v}$ and the first column of $M^{-1}$ are orthogonal to $\mathbf{r}_2,\cdots,\mathbf{r}_n$.  Since $\mathbf{r}_2,\cdots,\mathbf{r}_n$ are linearly independent, the orthogonal complement of their span has dimension 1.  Therefore, $\mathbf{v}$ is a scalar multiple of the first column of $M^{-1}$.  Recall that we can write $M^{-1}$ in terms of the cofactor matrix $C$ of $M$.  Specifically, $M^{-1}=\frac{1}{\det(M)}\,C^T$.  Thus the first column of $M^{-1}$ is a scalar multiple of the first row of $C$.  This allows us to claim that there is some $c$ such that $C_{1j}=c\mathbf{v}(j)$ for all $j\in[n]$.  By definition, $C_{1j}$ does not depend on $\row_1$, so $c$ does not depend on $\row_1$ either.  The Laplace expansion of the determinant gives $\det(M)= \sum_j C_{1j}\mathbf{r}_1(j)= c\sum_j \mathbf{v}(j)\mathbf{r}_1(j)$.  The formula for $c$ follows immediately by setting $\row_1=e_k$.
\end{proof}

\begin{lemma}
Let $1=b_1,\cdots,b_n$ be an integer sequence such that $b_{i+1}\leq b_1+\cdots+b_{i}$ for $i\in [n-1]$.  Then for every nonnegative integer $a\leq b_1+\cdots+b_n$, there exists $S\subset [n]$ such that $a=\sum_{i\in S} b_i$.
\end{lemma}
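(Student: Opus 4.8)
The plan is to induct on $n$, using the partial sums $B_k:=b_1+\cdots+b_k$ as bookkeeping. The base case $n=1$ is immediate: since $b_1=1$, the only nonnegative integers $a\le B_1=1$ are $0$ and $1$, realized by $S=\emptyset$ and $S=\{1\}$ respectively.

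For the inductive step, suppose the statement holds for all admissible sequences of length $n-1$. First observe that the truncation $1=b_1,\dots,b_{n-1}$ is itself admissible, since the inequalities it must satisfy, $b_{i+1}\le B_i$ for $i\in[n-2]$, form a subset of those assumed for the full sequence. Now fix a nonnegative integer $a\le B_n$ and split on whether $a\le B_{n-1}$. If $a\le B_{n-1}$, the inductive hypothesis applied to the truncation yields $S\subset[n-1]$ with $a=\sum_{i\in S}b_i$, and we are done. Otherwise $a>B_{n-1}$, and the strategy is to place $n$ into $S$ and recurse on the residual target $a-b_n$.

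The one point that actually uses the hypothesis — and the only real content of the argument — is checking that the residual $a-b_n$ again lies in the admissible window $[0,B_{n-1}]$ so that the inductive hypothesis can be reapplied. The upper bound is forced by $a\le B_n$, namely $a-b_n\le B_n-b_n=B_{n-1}$; for the lower bound we invoke the hypothesis at $i=n-1$, which gives $b_n\le B_{n-1}<a$ and hence $a-b_n>0$. Feeding $a-b_n$ into the inductive hypothesis produces $S'\subset[n-1]$ with $a-b_n=\sum_{i\in S'}b_i$, and then $S:=S'\cup\{n\}$ satisfies $\sum_{i\in S}b_i=a$, closing the induction. I expect this range check in the second case to be the only subtle step; everything else is routine. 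An equivalent ``greedy'' formulation — repeatedly subtract a usable $b_i$ from the target — is possible, but the inductive phrasing makes the role of the hypothesis $b_{i+1}\le b_1+\cdots+b_i$ most transparent.
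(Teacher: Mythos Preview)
Your proof is correct and follows essentially the same inductive argument as the paper: split on whether $a\le B_{n-1}$, and in the remaining case use $b_n\le B_{n-1}<a$ together with $a\le B_n$ to bring the residual $a-b_n$ back into range for the inductive hypothesis. You are simply more explicit about the base case and the two-sided range check than the paper, which compresses these into a couple of lines.
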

\begin{proof} This is proved using induction.  Assume it is true for $n-1$.  Then, if $a\leq b_1+\cdots+b_{n-1}$ we are already done.  We thus restrict our attention to $a>b_1+\cdots+b_{n-1}\ge b_n$.  In this case, note that $a-b_n\leq b_1+\cdots+b_{n-1}$, so applying the lemma for $n-1$ on $a-b_n$ again gives the result.
\end{proof}

\begin{lemma}
Let $\row_2,\cdots,\row_n\in\{0,1\}^n$ be linearly independent such that
\[
D:=\det\left(
\begin{bmatrix}
1&&\\
\cdash&\row_2&\cdash\\
&\vdots&\\
\cdash&\row_n&\cdash\\
\end{bmatrix}
\right)=\pm1.\]
If $\mathbf{v}\in\mathbb{R}^n$ is orthogonal to $\mathbf{r}_2,\cdots,\mathbf{r}_n$, and $\mathbf{v}(1),\cdots,\mathbf{v}(m)$ satisfy $0\le\mathbf{v}(i+1)\le\mathbf{v}(1)+\cdots+\mathbf{v}(i)$ for $i\in [m-1]$ and $\vecv(1)=1$, then
$d_n>\vecv(1)+\cdots+\vecv(m)$ where $d_n$ is the smallest natural number not in $\detset$.
\end{lemma}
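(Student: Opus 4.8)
The plan is to show that every integer in $\{0,1,\dots,\vecv(1)+\cdots+\vecv(m)\}$ occurs as $\det(M)$ for some $M$ in the family $\mathcal M(\row_2,\dots,\row_n)$; since $\detset$ is symmetric about $0$, this forces $d_n>\vecv(1)+\cdots+\vecv(m)$. The engine is the first lemma: because $\row_2,\dots,\row_n$ are linearly independent and $\vecv(1)=1\neq 0$, applying it with $k=1$ shows that for \emph{every} $\row_1\in\{0,1\}^n$ the matrix $M$ with rows $\row_1,\row_2,\dots,\row_n$ satisfies $\det(M)=c\sum_j\vecv(j)\row_1(j)$ with $c=\vecv(1)^{-1}D=D=\pm1$, independent of $\row_1$. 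I would first remark that this identity in fact holds for all $2^n$ members of the family, not only the invertible ones: the cofactor-row argument in the proof of the first lemma uses only linear independence of $\row_2,\dots,\row_n$, and when $M$ is singular the left side is $0$ while its first row $\row_1$ then lies in the span of $\row_2,\dots,\row_n$, i.e.\ in $\vecv^\perp$, so the right side vanishes too.

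Next I would observe that each $\vecv(j)$ is an integer: taking $\row_1=e_j$ gives $c\,\vecv(j)=\det(M)\in\mathbb Z$ and $c=\pm1$. Hence $\vecv(1)+\cdots+\vecv(m)$ is an integer, and the tuple $(\vecv(1),\dots,\vecv(m))$ meets the hypotheses of the second lemma verbatim, namely $\vecv(1)=1$ and $0\le\vecv(i+1)\le\vecv(1)+\cdots+\vecv(i)$ for $i\in[m-1]$. By that lemma, for every nonnegative integer $a\le\vecv(1)+\cdots+\vecv(m)$ there is a set $S\subseteq[m]$ with $a=\sum_{i\in S}\vecv(i)$. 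Letting $\row_1\in\{0,1\}^n$ be the indicator vector of $S$ (so $\row_1(j)=0$ for $j>m$) and plugging into the displayed identity gives $\det(M)=c\,a=\pm a$, so $\pm a\in\detset$, and therefore $a\in\detset$ by the symmetry of $\detset$ under a row swap (the case $n=1$ being trivial). Thus $\{0,1,\dots,\vecv(1)+\cdots+\vecv(m)\}\subseteq\detset$, which gives $d_n>\vecv(1)+\cdots+\vecv(m)$.

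The proof is essentially an assembly of the two preceding lemmas, so there is no deep obstacle; the two points that genuinely require care — and the places where the hypothesis $D=\pm1$ enters — are (i) checking that the first lemma's determinant formula holds for every member of the family, including the singular ones, and (ii) deducing that the entries of $\vecv$ are integers so that the second lemma is applicable. Everything else is bookkeeping.
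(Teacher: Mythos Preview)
Your proof is correct and follows essentially the same route as the paper's: apply Lemma~2.1 with $k=1$ to obtain $\det(M)=c\sum_j\vecv(j)\row_1(j)$ with $c=D/\vecv(1)=\pm1$, then invoke Lemma~2.2 to realize every nonnegative integer up to $\sum_{i\le m}\vecv(i)$ by choosing $\row_1$ to be the indicator of a suitable $S\subseteq[m]$. The only cosmetic differences are that the paper first swaps $\row_2$ and $\row_3$ to force $D=1$ (so that $c=1$ and your final appeal to the symmetry of $\detset$ is unnecessary), while you in turn are more explicit than the paper about the integrality of the $\vecv(j)$ and about why the determinant identity extends to singular members of the family.
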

\begin{proof}
If $D=-1$, then swap $\row_2$ and $\row_3$ so that $D=1$ and $\vecv$ is still orthogonal to all the rows.
%Let $\mathcal{M}:=\mathcal{M}(\mathbf{r}_2,\cdots,\mathbf{r}_n)$.
For any invertible $M\in\mathcal{M}(\mathbf{r}_2,\cdots,\mathbf{r}_n)$, let $\row_1$ be its top row.  Then by Lemma 2.1, we can write for the same $c$ that
\[\det(M)=c\,\sum_{j=1}^n \vecv(j)\row_1(j)=c\,\sum_{j\in S}
\vecv(j)
\quad\text{where }S=\{j\,|\,\row_1(j)=1\}
\]
In particular, since $\vecv(1)\neq0$, Lemma 2.1 gives $c=D/\vecv(1)=1$.  By Lemma 2.2, we can select $\row_1$ so that $\det(M)=a$ for any positive integer $a\leq \mathbf{v}(1)+\cdots+\mathbf{v}(m)$.
\end{proof}

\section{Construction}
We will first construct a matrix $M$ whose rows $\mathbf{s}_i$ have entries in $\{-1,0,1\}$.  Then we will construct a transformation $T$ such that the rows $\row_i$ of $TM$ have entries in $\{0,1\}$.  Finally we will find $\mathbf{v}$ that will satisfy the hypothesis of Lemma $2.3$ along with the rows $\row_i$ of $TM$.

Fix an integer $k\ge 2$.  Let the top row of $M$ be $[1,0,\cdots,0]$.  We separate the rest of the rows of $M$ into two categories.
\begin{align*}
\text{`Recursive' rows:}&\quad\quad
\mathbf{s}_{i}(j)=
\begin{cases}
-1&\text{if }j=i\\
 1&\text{if }i-k\le j<i\\
 0&\text{o.w.}
\end{cases}
%\quad\quad\quad
&\text{for }i\in\{1,\cdots,n-k\}.
\\
\text{`Finishing' rows:}&\quad\quad
\mathbf{s}_{i}(j)=
\begin{cases}
 1&\text{if }j=i\\
 1&\text{if }i-k\le j\le n-k\\
 0&\text{o.w.}
\end{cases}
%\quad\quad\quad
&\text{for }i\in\{n-k+1,\cdots,n\}.
\end{align*}
The transformation $T=[t_{ij}]$ is defined by
\[
t_{ij}=\begin{cases}
 1&i=j=1\\
 1&\text{if }j\ge i>1\text{ and } i\equiv j \mod k\\
 0&\text{o.w.}
\end{cases}.\]
For example, for $k=3,n=10$ we have
\[M=\begin{bmatrix}
1\\
1&-1\\
1&1&-1\\
1&1&1&-1\\
&1&1&1&-1\\
&&1&1&1&-1\\
&&&1&1&1&-1\\
&&&&1&1&1&1\\
&&&&&1&1&&1\\
&&&&&&1&&&1\\
\end{bmatrix},
\quad
T=\begin{bmatrix}
1&&&&&&&&&\\
&1&&&1&&&1&&\\
&&1&&&1&&&1&\\

&&&1&&&1&&&1\\
&&&&1&&&1&&\\
&&&&&1&&&1&\\

&&&&&&1&&&1\\
&&&&&&&1&&\\
&&&&&&&&1&\\

&&&&&&&&&1\\
\end{bmatrix}.
\]

Note that the top row of $T$ is $[1,0,\cdots,0]$, so the top row of $TM$ is the same as the top row of $M$, which is also $[1,0,\cdots,0]$.
It is worth remarking that the case of $k=2$ is essentially equivalent to Paseman's construction.

\begin{lemma}
The rows $\mathbf{r}_i$ of $TM$ have entries in \{0,1\}.
\end{lemma}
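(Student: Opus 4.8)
The plan is to exploit that $T$ has $0/1$ entries, so every row of $TM$ is a sum of a sub-collection of the rows $\mathbf{s}_j$ of $M$, and then to check that inside each such sum the $-1$'s all cancel against $+1$'s while the surviving $+1$'s never overlap.

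First I would pin down which rows get added. Since $t_{ij}\in\{0,1\}$, the $i$th row of $TM$ is $\sum_{j:\,t_{ij}=1}\mathbf{s}_j$. For $i=1$ the only such index is $j=1$, giving the first row $\mathbf{s}_1=[1,0,\dots,0]$, which is already $0/1$. For $i>1$ we have $t_{ij}=1$ exactly when $j\ge i$ and $j\equiv i\pmod k$, i.e. for $j$ ranging over the arithmetic progression $a_0<a_1<\dots<a_m$ with $a_t=i+tk$ and $a_m$ maximal subject to $a_m\le n$. Maximality forces $a_m>n-k$, so $a_m$ indexes a finishing row; and each $a_t$ with $t<m$ satisfies $a_t\le a_m-k\le n-k$ and $a_t\ge a_0=i\ge 2$, so $a_0,\dots,a_{m-1}$ index recursive rows (the degenerate case $i>n-k$ is just $m=0$).

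Next I would rewrite the rows in indicator form. Writing $\mathbf{1}_S$ for the $0/1$ vector supported on $S\cap[n]$, the construction gives $\mathbf{s}_{a_t}=\mathbf{1}_{[a_t-k,\,a_t-1]}-\mathbf{1}_{\{a_t\}}$ for $t<m$ and $\mathbf{s}_{a_m}=\mathbf{1}_{[a_m-k,\,n-k]}+\mathbf{1}_{\{a_m\}}$. The arithmetic fact $a_t-k=a_{t-1}$ is what makes everything telescope: the positive intervals occurring across $\mathbf{s}_{a_0},\dots,\mathbf{s}_{a_m}$ are precisely $[a_0-k,a_0-1],\,[a_0,a_1-1],\dots,[a_{m-2},a_{m-1}-1],\,[a_{m-1},n-k]$, which are consecutive and pairwise disjoint with union the single interval $[i-k,\,n-k]$. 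Summing therefore collapses to
\[\mathbf{r}_i\;=\;\mathbf{1}_{[i-k,\,n-k]}\;-\;\sum_{t=0}^{m-1}\mathbf{1}_{\{a_t\}}\;+\;\mathbf{1}_{\{a_m\}}.\]
Each deleted point $a_t$ ($t<m$) lies in $[i,n-k]\subseteq[i-k,n-k]$ and the $a_t$ are distinct, so the first two terms combine to the $0/1$ indicator of $[i-k,n-k]\setminus\{a_0,\dots,a_{m-1}\}$; and $a_m>n-k$ sits outside $[i-k,n-k]$, so adding $\mathbf{1}_{\{a_m\}}$ merely flips one more coordinate to $1$. Hence $\mathbf{r}_i\in\{0,1\}^n$, and the $m=0$ case is the same identity with an empty middle sum.

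I expect the only genuine friction to be bookkeeping the boundary cases: clipping $[i-k,n-k]$ to $[1,n]$ when $i\le k$, the one-term chains when $i>n-k$, and the vanishing of the "middle" recursive blocks when $m=1$ — each of which leaves the displayed telescoping identity intact once every interval is read intersected with $[n]$. It is also worth noting that index $1$ never appears in a progression with starting point $i>1$, so the special top row $\mathbf{s}_1$ is only ever used by itself, and the mild clash between "the top row is $[1,0,\dots,0]$" and the recursive formula at $i=1$ causes no trouble.
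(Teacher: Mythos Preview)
Your argument is correct and follows essentially the same approach as the paper: both compute $\mathbf{r}_i$ as the sum $\mathbf{s}_i+\mathbf{s}_{i+k}+\cdots$ and verify that the supports of consecutive summands meet only where a $-1$ cancels a $+1$. The paper carries this out by an entry-by-entry case split on the ranges of $i$ and $j$, whereas you package the same cancellation as a telescoping of indicator intervals into the closed form $\mathbf{r}_i=\mathbf{1}_{[i-k,\,n-k]}-\sum_{t<m}\mathbf{1}_{\{a_t\}}+\mathbf{1}_{\{a_m\}}$; this is a cleaner presentation of the identical computation rather than a different route.
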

\begin{proof}
First we noted above that $\row_1=[1,0,\cdots,0]$.  Then for $i\ge2$, the definitions of $T$ and $M$ give
$$\row_i=\srow_i+\srow_{i+k}+\cdots+\srow_{i+k\lfloor\frac{n-i}{k}\rfloor}.$$
%For $i\ge n-k+1$ we just have $\row_i=\srow_i\in\{0,1\}^n$ so are already done in that case.  Now fix any $i\le n-k$.  Then, note that the support of $\mathbf{s}_{i+lk}$ is a subset of $\{i+(l-1)k,\cdots,i+lk\}$.  We consider the case of $j=i+lk$ for some $l$ and $j\in\{i+(l-1)k+1,\cdots,i+lk-1\}$ for some $l$ separately.
We show $\row_i(j)\in\{0,1\}$ in each of several cases.
For $i\ge n-k$ we just have $\row_i=\srow_i\in\{0,1\}^n$ so are already done in that case.
For $j\ge n-k$, note that the last $k$ columns of $M$ match the last $k$ columns of the identity, so the last $k$ columns of $TM$ exactly match those of $T$, so all those entries are in $\{0,1\}$.

Now fix any $i,j\le n-k-1$.
Note that the support of $\mathbf{s}_{i+lk}$ is a subset of $\{i+(l-1)k,\cdots,i+lk\}$.
We consider the case of $j=i+lk$ for some $l$ and $j\in\{i+(l-1)k+1,\cdots,i+lk-1\}$ for some $l$ separately.
For $j=i+lk$ we have
$$\row_i(j)=\srow_i(j)+\srow_{i+k}(j)+\cdots+\srow_{i+k\lfloor\frac{n-i}{k}\rfloor}(j)
=
\srow_{i+lk}(j)+\srow_{i+(l+1)k}(j)=-1+1=0.$$
For $j\in\{i+(l-1)k+1,\cdots,i+lk-1\}$, we note that $j$ lies in the support of $\srow_{i+lk}$ and no other rows. So $\row_i(j)=1$.

\end{proof}

\begin{lemma}
If $\vecv$ is orthogonal to $\srow_2,\cdots,\srow_n$, then it is orthogonal to $\row_2,\cdots,\row_n$.
\end{lemma}
\begin{proof}
By hypothesis, $M\vecv=[\srow_1\cdot\vecv, 0,\cdots,0]^T$.  Since $T$ is upper triangular, $T[\srow_1\cdot\vecv, 0,\cdots,0]^T=[\srow_1\cdot \vecv, 0,\cdots,0]^T$ so $TM\vecv=[\srow_1\cdot\vecv, 0,\cdots,0]^T$ as required.
\end{proof}

Before we proceed, we define the $k$-step Fibonacci sequence.
\begin{definition}
Let $F_k(j)$ be the $j$th term of the $k$-step Fibonacci sequence.  That is, let $F_k(j)=0$ for $j\le0$, then let $F_k(1)=1$, and finally let $F_k(j)=\sum_{j'=j-k}^{j-1} F_k(j')$ for $j\ge3$.
\end{definition}
\begin{theorem}
For any $n\ge 2k$,
$$d_n>F_k(1)+\cdots+F_k(n-k).$$
\end{theorem}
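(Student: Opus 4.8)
The plan is to apply Lemma 2.3 to the rows $\row_2,\ldots,\row_n$ of $TM$, taking $m=n-k$. Lemma 3.1 provides the hypothesis $\row_i\in\{0,1\}^n$, and since $TM$ has first row $\row_1=[1,0,\ldots,0]$ and remaining rows $\row_2,\ldots,\row_n$, the determinant $D$ appearing in Lemma 2.3 is exactly $\det(TM)=\det(T)\det(M)$. Now $T$ is unit upper triangular, so $\det(T)=1$; and $M$ is lower triangular, since each recursive row $\srow_i$ is supported on columns $\le i$ and each finishing row $\srow_i$ on columns $\le\max(n-k,i)=i$. Hence $\det(M)$ is the product of its diagonal entries, namely $+1$ from the top row, $-1$ from each recursive row, and $+1$ from each finishing row, which gives $\det(M)=(-1)^{n-k-1}=\pm1$. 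Thus $D=\pm1$, and in particular $\row_2,\ldots,\row_n$ are linearly independent.

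Next I would exhibit the vector $\vecv$. By Lemma 3.2 it suffices to find $\vecv$ orthogonal to $\srow_2,\ldots,\srow_n$, and those conditions solve transparently. For a recursive row $\srow_i$ (with $2\le i\le n-k$), the equation $\srow_i\cdot\vecv=0$ reads $\vecv(i)=\vecv(i-1)+\cdots+\vecv(i-k)$, which together with $\vecv(1)=1$ and the convention that nonexistent coordinates vanish is precisely the $k$-step Fibonacci recurrence; this forces $\vecv(j)=F_k(j)$ for all $1\le j\le n-k$. For a finishing row $\srow_i$ (with $n-k+1\le i\le n$), the equation reads $\vecv(i)=-\bigl(\vecv(i-k)+\cdots+\vecv(n-k)\bigr)$, and because $n\ge 2k$ forces $i-k\ge n-2k+1\ge1$, every index on the right lies in $\{1,\ldots,n-k\}$, so one simply sets $\vecv(i)=-\sum_{j=i-k}^{n-k}F_k(j)$. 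With these $n$ coordinates a direct check gives $\srow_i\cdot\vecv=0$ for every $i\in\{2,\ldots,n\}$, hence $\row_i\cdot\vecv=0$ for every such $i$ by Lemma 3.2.

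It then remains to verify the chain hypothesis of Lemma 2.3 on the first $m=n-k$ coordinates. Here $\vecv(1)=F_k(1)=1$; nonnegativity $F_k(j)\ge0$ is immediate by induction; and for $1\le i\le n-k-1$ the recurrence gives $\vecv(i+1)=F_k(i+1)=\sum_{j=\max(1,\,i+1-k)}^{i}F_k(j)$, a sub-sum of the nonnegative terms of $\sum_{j=1}^{i}F_k(j)=\vecv(1)+\cdots+\vecv(i)$, so $0\le\vecv(i+1)\le\vecv(1)+\cdots+\vecv(i)$. Lemma 2.3 then applies and yields $d_n>\vecv(1)+\cdots+\vecv(m)=F_k(1)+\cdots+F_k(n-k)$.

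The point worth stressing is that, given Lemmas 2.1--3.2, essentially nothing here is a free choice: the recursive rows are rigged so that the one-dimensional orthogonal complement of $\srow_2,\ldots,\srow_n$ grows like the $k$-step Fibonacci sequence on its first $n-k$ coordinates, the finishing rows and the transformation $T$ are exactly what is needed to stay inside $\{0,1\}^{n\times n}$ while keeping $D=\pm1$, and the required subadditivity chain is nothing but the Fibonacci recurrence. Accordingly, the only spots that call for any care are the determinant bookkeeping for $\det(TM)$ and the boundary behavior of the recurrence, where the hypothesis $n\ge2k$ is used precisely to keep the finishing-row indices $i-k$ from dropping below column $1$.
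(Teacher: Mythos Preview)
Your proposal is correct and follows essentially the same route as the paper: build $\vecv$ orthogonal to the $\srow_i$'s so that the recursive rows force $\vecv(j)=F_k(j)$ for $j\le n-k$, pass to orthogonality with the $\row_i$'s via Lemma 3.2, observe $\det(TM)=\pm1$ from triangularity, and invoke Lemma 2.3 with $m=n-k$. Your write-up in fact supplies a couple of details the paper leaves implicit, namely the exact value $\det(M)=(-1)^{n-k-1}$ and the role of the hypothesis $n\ge 2k$ in keeping the finishing-row indices $i-k$ within $\{1,\ldots,n-k\}$.
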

\begin{proof}
We start by constructing $\vecv$ that is orthogonal to $\srow_2,\cdots,\srow_n$.  Let $\vecv(1)=1$.
Orthogonality with the `recursive' rows requires
\[\sum_{j=\max(i-k, 1)}^{i-1} \vecv(j)=\vecv(i)\quad\text{for }i\in\{2,\cdots,n-k\}.\]
%i.e. a unique expression for 
Orthogonality with the `finishing' rows requires
\[-\sum_{j=i-k}^{n-k} \vecv(j)=\vecv(i)\quad\text{for }i\in\{n-k+1,\cdots,n\}.\]
Note each entry in $\vecv$ is defined solely in terms of the entries before it.  Further note that the definitions of $\vecv(j)$ and $F_k(j)$ match for $j\le n-k$, so we actually have
\[\vecv(j)=F_k(j)\text{ for }j\le n-k.\]
Then since all the terms are positive, $\vecv$ satisfies $\vecv(i)\le\vecv(1)+\cdots+\vecv(i-1)$ for $i\le n-k$.  By Lemma $3.2$, we have that $\vecv$ is orthogonal to $\row_2,\cdots,\row_n$.  
Since $M$ and $T$ are triangular with $\pm1$ on the diagonals, we have $\det(TM)=\pm 1$.
The hypotheses of Lemma $2.3$ are thus satisfied for $m=n-k$, so we have
$$d_n>\vecv(1)+\cdots+\vecv(n-k)=F_k(1)+\cdots+F_k(n-k)$$
as desired.%Taking the supremum over possible values of $k$ gives the desired result.
\end{proof}
We finish with an approximation of $F_k(j)$.
\begin{lemma}
$F_k(n)>\frac15\alpha_k^n$ for all $k\ge2,n\ge8$ where $\alpha_k$ is the zero of $z-2+z^{-k}$ closest to $2$.  Furthermore, $\alpha_k\in[2-2^{1-k},2)$.
\end{lemma}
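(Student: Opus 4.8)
\emph{Proof outline.}
The plan is to first pin down $\alpha_k$ by relating $z-2+z^{-k}$ to the characteristic polynomial of the $k$-step Fibonacci recurrence — this yields both the location claim $\alpha_k\in[2-2^{1-k},2)$ and the identity $\alpha_k^k=\alpha_k^{k-1}+\cdots+\alpha_k+1$ — and then to deduce $F_k(n)>\tfrac15\alpha_k^n$ from a one-line induction whose base cases are the first $k$ terms of $F_k$.

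For the first part I would clear denominators: $z^k(z-2+z^{-k})=z^{k+1}-2z^k+1=(z-1)q(z)$, where $q(z)=z^k-z^{k-1}-\cdots-z-1$ is exactly the characteristic polynomial of the recurrence $F_k(j)=\sum_{j'=j-k}^{j-1}F_k(j')$. Setting $h(z)=q(z)z^{-k}=1-z^{-1}-\cdots-z^{-k}$ for $z>0$ gives $z-2+z^{-k}=(z-1)h(z)$, and $h$ is strictly increasing on $(0,\infty)$ (as $h'(z)=\sum_{j=1}^k j\,z^{-j-1}>0$) with $h(1)=1-k<0$ and $h(2)=2^{-k}>0$; hence $h$ has a unique positive zero $\beta\in(1,2)$, it is the unique positive zero of $q$, it satisfies $\beta^k=\beta^{k-1}+\cdots+1$, and $\beta<2$. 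To identify $\beta$ as the zero of $z-2+z^{-k}$ closest to $2$, note that every zero $w\neq1$ is a zero of $q$, and if $|w|>1$ then the triangle inequality applied to $w^k=w^{k-1}+\cdots+1$ gives $|w|^k\le(|w|^k-1)/(|w|-1)$, i.e. $(|w|-1)h(|w|)\le0$, forcing $|w|\le\beta$; together with $|w-2|\ge2-|w|\ge2-\beta$ (equality only at $w=\beta$) and $|1-2|=1>2-\beta$, this shows $\alpha_k=\beta$, so in particular $\alpha_k<2$.

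For the lower bound on $\alpha_k$, since $h$ is increasing and vanishes at $\alpha_k$ it suffices to check $h(2-2^{1-k})\le0$; summing a geometric series reduces this to $x_0^{-k}\le2^{1-k}$ with $x_0=2-2^{1-k}$, i.e. to the elementary inequality $1-2^{-k}\ge2^{-1/k}$. I would prove that by raising both sides to the $k$-th power and applying Bernoulli: $(1-2^{-k})^k\ge1-k2^{-k}\ge\tfrac12=(2^{-1/k})^k$, the middle step using $2k\le2^k$ for $k\ge2$. Hence $\alpha_k\ge2-2^{1-k}$.

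For the bound on $F_k$, the key observation is that for $2\le j\le k+1$ the recurrence is ``uncapped'': writing $S(i)=\sum_{m\le i}F_k(m)$ one gets $F_k(j)=S(j-1)$ for such $j$, so $S(j)=2S(j-1)$, $S(j)=2^{j-1}$, and $F_k(j)=2^{j-2}$ on the entire window $2\le j\le k+1$ — precisely $k$ consecutive indices. Since $\alpha_k<2$, this gives $F_k(j)=\tfrac14\,2^j>\tfrac15\,2^j>\tfrac15\,\alpha_k^j$ there, which is exactly the block of base cases an order-$k$ recurrence requires. For $n\ge k+2$, assuming the bound for all $2\le j<n$ (all such $j$ include the ones appearing in the recurrence, since $n-k\ge2$), the identity $\alpha_k^k=\alpha_k^{k-1}+\cdots+1$ closes it:
\[F_k(n)=\sum_{j=n-k}^{n-1}F_k(j)>\frac15\sum_{j=n-k}^{n-1}\alpha_k^j=\frac15\,\alpha_k^{\,n-k}\cdot\frac{\alpha_k^k-1}{\alpha_k-1}=\frac15\,\alpha_k^{\,n-k}\cdot\alpha_k^k=\frac15\,\alpha_k^n.\]
This in fact yields $F_k(n)>\tfrac15\alpha_k^n$ for every $n\ge2$, a fortiori for $n\ge8$. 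The induction is the easy half; I expect the main obstacle to be the bookkeeping in the root-location step — rigorously confirming that $\alpha_k$ is the zero of $z-2+z^{-k}$ nearest $2$ and extracting $\alpha_k\ge2-2^{1-k}$ from $1-2^{-k}\ge2^{-1/k}$ — though none of it is deep.
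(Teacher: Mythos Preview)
Your argument is correct and takes a genuinely different route from the paper. The paper locates $\alpha_k$ by applying Rouch\'e's theorem on the disk $|z-2|\le 2^{1-k}$ and then invokes a known closed-form (nearest-integer) expression for $F_k(n)$, bounding the coefficient to get $F_k(n)>\tfrac12\alpha_k^{n-1}-1$ and finishing numerically for $n\ge 8$. You instead factor $z^{k+1}-2z^k+1=(z-1)q(z)$, use monotonicity of $h(z)=q(z)z^{-k}$ to pin down the unique positive root $\beta\in(1,2)$, identify $\alpha_k=\beta$ via the triangle-inequality bound $|w|\le\beta$ for all roots of $q$, extract $\alpha_k\ge 2-2^{1-k}$ from Bernoulli via $(1-2^{-k})^k\ge\tfrac12$, and then run a clean order-$k$ induction using the exact values $F_k(j)=2^{j-2}$ on the window $2\le j\le k+1$ together with $\alpha_k^k=1+\alpha_k+\cdots+\alpha_k^{k-1}$. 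Your approach is entirely elementary (no complex analysis, no external formula) and in fact yields $F_k(n)>\tfrac15\alpha_k^n$ for every $n\ge 2$, stronger than the stated $n\ge 8$; the paper's version is quicker once one is willing to quote Rouch\'e and the exact $k$-bonacci formula.
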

\begin{proof}
%If we modify the definition of the base case of $F_k(n)$ such that $F_k(n)=0$ for $k\le0$ and $F_k(1)=F_k(2)=1$, and one starts recursing at $n=3$,
%then the exact formula is known to be
An exact formula for $F_k(j)$ is known:
\begin{equation}
    \label{fib}
F_k(j)=\biggr\lfloor\alpha_k^{n-1}
\frac{\alpha_k-1}{k(\alpha_k-2)+\alpha_k}\biggr\rceil.
\end{equation}
%Since our base case is larger, (\ref{fib}) is a lower bound on our definition of $F_k(n)$.
We can approximately locate $\alpha_k$ using Rouche's theorem.  Let $K=\{z\in\mathbb{C}\,;\,|z-2|\le 2^{1-k}\}$.  Then on $\partial K$ we have $|z^{-k}|\le(2-2^{1-k})^{-k}<2^{1-k}=|z-2|$.  The number of zeros of $z-2$ inside $K$ is one (it is $z=2$), so $z-2+z^{-k}$ has exactly zero inside $K$ as well.
%It is the unique zero of norm more than one.
This implies $|\alpha_k-2|\le 2^{1-k}$.  Finally $z-2+z^{-k}$ is negative at $z=1.5$ and positive at $z=2$, so $\alpha_k$ is real and less than $2$.

The coefficient on $\alpha_k^{n-1}$ in (\ref{fib}) is decreasing in $\alpha_k$, so we can bound it by
\[\frac{\alpha_k-1}{k(\alpha_k-2)+\alpha_k}>
\frac{2-1}{k(2-2)+2}
=\frac12.\]
Thus
\[
F_k(n)>\frac12\alpha^{n-1}_k-1> \frac14\alpha^n_k-1.
\]
For $k\ge 2,n\ge 8$ we have $\alpha_k^n>20$, i.e. $\frac14\alpha^n_k-1>\frac15\alpha^n_k$.

\end{proof}
\begin{corollary}
\[d_n>c\,2^n/n\]
for $c=1/201$.%, $n\ge8$.
\end{corollary}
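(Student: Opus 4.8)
The plan is to combine Theorem 3.3 with Lemma 3.4 and optimize over the free parameter $k$. From Theorem 3.3 we have $d_n > \sum_{j=1}^{n-k} F_k(j)$ for any $n \ge 2k$, and it suffices to lower bound this sum by a single term, say $F_k(n-k)$, and then invoke Lemma 3.4. So first I would write $d_n > F_k(n-k) > \frac15 \alpha_k^{\,n-k}$, valid as long as $n-k \ge 8$ (and $k \ge 2$), where $\alpha_k \in [2 - 2^{1-k}, 2)$.

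Next I would handle the exponential loss from replacing $\alpha_k$ by $2$. Since $\alpha_k \ge 2 - 2^{1-k} = 2(1 - 2^{-k})$, we get $\alpha_k^{\,n-k} \ge 2^{\,n-k}(1 - 2^{-k})^{\,n-k} \ge 2^{-k}\,2^n\,(1-2^{-k})^{n}$. The factor $(1-2^{-k})^n$ is controlled by choosing $k$ growing like $\log_2 n$: with $2^{-k} \le 1/n$, i.e. $k \ge \log_2 n$, one has $(1 - 2^{-k})^n \ge (1 - 1/n)^n \ge$ a positive constant (e.g. $\ge 1/4$ for $n \ge 2$, or more carefully $\to e^{-1}$). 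So the natural choice is $k = \lceil \log_2 n \rceil$, which makes $2^{-k}$ of order $1/n$ and leaves $d_n \gtrsim 2^n / n$ up to absolute constants.

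The main obstacle — really the only nontrivial part — is bookkeeping the constants to land exactly at $c = 1/201$, together with checking the side conditions $n \ge 2k$ and $n - k \ge 8$ hold for all $n$ past some threshold, and then absorbing the finitely many small $n$ into the constant. Concretely, with $k = \lceil \log_2 n \rceil$ we have $2^k < 2n$, so $\alpha_k^{\,n-k} > (2 - 1/n)^{\,n-k} = 2^{n-k}(1-1/(2n))^{n-k} > \frac{2^n}{2n}(1 - 1/(2n))^n$, and $(1-1/(2n))^n \to e^{-1/2} \approx 0.6065$; combined with the $\frac15$ from Lemma 3.4 this gives roughly $\frac{1}{5} \cdot \frac{1}{2} \cdot e^{-1/2} \approx 0.0607 > 1/201 \approx 0.00498$ in the limit, with ample room to cover the transient behavior and the rounding in $k$. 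I would therefore fix a threshold $n_0$ (large enough that $n - \lceil\log_2 n\rceil \ge 8$ and $n \ge 2\lceil \log_2 n\rceil$ and the product of constants stays above $1/201$ — some explicit value like $n_0 = 20$ or $30$ should work), verify the bound directly for $n \ge n_0$, and then note that for $2 \le n < n_0$ the claim $d_n > 2^n/(201 n)$ is a finite check (indeed $d_n \ge 1$ trivially handles the smallest cases since $2^n/(201n) < 1$ for small $n$, and the known small-$n$ values of $d_n$ cover the rest). That completes the proof.
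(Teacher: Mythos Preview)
Your approach is essentially the same as the paper's: drop the sum from Theorem~3.3 to the single term $F_k(n-k)$, apply Lemma~3.4, choose $k$ near $\log_2 n$, and then bookkeep the constants and side conditions (the paper takes $k=\lfloor\log_2 n\rfloor$ and works with logarithms rather than bounding the exponential directly, but the structure is identical). One small slip: from $k=\lceil\log_2 n\rceil$ you only get $2^k\ge n$, hence $2^{1-k}\le 2/n$ and $\alpha_k\ge 2-2/n$ rather than $2-1/n$; this changes your limiting constant from $\tfrac{1}{10}e^{-1/2}$ to $\tfrac{1}{10}e^{-1}\approx 0.037$, which as you observe is still far above $1/201$, so the argument goes through unchanged.
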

\begin{proof}
When $n<8$, the lower bound is less than 1 so the bound is trivially true. For $n\ge8$, we take the logarithm of $d_n$ and apply Lemma $3.4$.
\begin{align*}
\log (d_n)
  &\ge \log\left(F_k(1)+\cdots+F_k(n-k)\right)
\\&\ge \log F_k(n-k)
\\&\ge (n-k)\log\alpha_k - \log 5
\\&=   n\left(\left(1-\frac{k}{n}\right)\log\alpha_k - \frac1n\log 5\right)
\\&\ge n\left(\left(1-\frac{k}{n}\right)\log\left(2-2^{1-k}\right) - \frac1n\log 5\right)
\\&=   n\left(\log\left(2-2^{1-k}\right)-\frac{k\log\left(2-2^{1-k}\right)+\log 5}{n}\right)
\end{align*}
Set $k=\lfloor\log_2 n\rfloor$.  Then one has
\[\log\left(2-2^{1-\lfloor\log_2n\rfloor}\right)\ge\log2-\frac3{n}\]
and for $\epsilon=\log(10e^3)/\log n$, one has
\begin{align*}
\frac{3+k\log\left(2-2^{1-k}\right)+\log 5}{n}
\le
\frac{3+k\log2+\log5}{n}
\le\frac{\log n+\log(10e^3)}{n}
=(1+\epsilon)\frac{\log n}{n}.
\end{align*}
Thus
$\log (d_n)\ge n\log2-(1+\epsilon)\log n$.  Exponentiating gives $d_n\ge2^n/n^{1+\epsilon}$, and using the numerical approximation $n^{\epsilon}=10e^3<201$ yields the final result.
\end{proof}
\section{Acknowledgements}
The author thanks Asaf Ferber for his suggestion of this topic, funding, advisement, and guidance.

\newpage

\iffalse

\fi

\bibliography{mybib.bib}
\bibliographystyle{plain}
\end{document}